\documentclass[letterpaper,12pt]{amsart}

\usepackage{amsfonts,amsmath,amssymb,amsthm}

\newcommand{\norm}[2]{||#1||_{#2}}
\newcommand{\pnorm}[2]{\rho_{#2}(#1)}
\newcommand{\N}{\ensuremath{\mathbb{N}}}
\newcommand{\no}{||\cdot||}

\theoremstyle{definition}
	\newtheorem{defi}{Definition}[section]
\theoremstyle{plain}
	\newtheorem{lem}[defi]{Lemma}
	\newtheorem{thm}[defi]{Theorem}
	\newtheorem{prop}[defi]{Proposition}

\title{A norm for Tsirelson's Banach Space}
\author{Diana Ojeda-Aristizabal}
\address{Department of Mathematics, Cornell University, Ithaca,NY, 14853-4201}
\thanks{The research of the author presented in this paper was partially supported by NSF
grant DMS\textendash 0757507. Any opinions, findings, and conclusions or recommendations
expressed in this article are those of the author and do not necessarily reflect the
views of the National Science Foundation.}

\begin{document}

\bibliographystyle{plain}

\begin{abstract}
 	We give an expression for the norm of the space constructed by Tsirelson. The implicit equation satisfied by this norm is dual to the implicit equation for the norm of the dual of Tsirelson space given by Figiel and Johnson. The expression can be modified to give the norm of the dual of any mixed Tsirelson space. In particular, our results can be adapted to give the norm for the dual of Schlumprecht space.
\end{abstract}
\maketitle

\section{Introduction}
	In the 1970s Tsirelson \cite{Tsi} constructed a space with no isomorphic copies of $c_0$ or $\ell_p$, ($1\leq p<\infty$). The special properties of Tsirelson's space $T$ derive from certain saturation properties of the unit ball. The original construction of the space is geometric: one defines a subset $V$ of $\ell_{\infty}$ with certain properties and takes $T$ to be the linear span of $V$ with the norm that makes $V$ be the unit ball. There is no expression for the norm of $T$, which makes it difficult to study the space.\\

	Later, Figiel and Johnson \cite{Tsi_Figiel} gave the following implicit expression for the norm of $T^*$, the dual of $T$:
	\[
	\norm{x}{}=\max\left\{\norm{x}{\infty},\frac{1}{2}\max\left\{\sum_{i=1}^k \norm{E_ix}{}: k\in\N, k\leq E_1<\cdots<E_k\right\}\right\}.
	\]

	 It is the dual of the original  Tsirelson space that came to be known as Tsirelson space and it is denoted in the literature by $T$. Since we are analyzing the original construction due to Tsirelson, we shall call the space he constructed $T$, the space $F$ will be the completion of $c_{00}$ with respect to the norm given by Figiel and Johnson.\\

	Many results about $T^*$ followed and the properties of $T^*$ were widely studied (see \cite{Tsi_Casazza}). Besides studying the properties of $F$, many Tsirelson-type spaces were studied, that is, spaces whose norms are given by some variation of the expression found by Figiel and Johnson. Schlumprecht space, constructed in \cite{Schlum}, is an example of such a space. In this paper we will give an expression for the norm of the original Tsirelson space. We will prove that there is a norm $||\cdot||$ on $c_{00}$ that satisfies the following implicit equation
	\begin{eqnarray*}
	||x||&=& \min\{2 \min\{\underset{1\leq i\leq k}{\max}\norm{E_ix}{}:k\leq E_1<\cdots <E_k, x=\sum_1^k E_i x\},\\
		&&\inf\{\norm{y}{}+\norm{z}{}:x=y+z, \mathrm{supp} (y)\subseteq \mathrm{supp} (x)\}\}.
	\end{eqnarray*}
	In fact, we shall prove that the norm of the original Tsirelson space is maximum (in the point-wise sense) among the norms satisfying the implicit equation above and such that $||e_i||=1$ for every vector $e_i$ in the standard basis of $c_{00}$. Note that, as opposed to the implicit equation given by Fiegel and Johnson, this expression doesn't allow us to calculate the norm of finitely supported vectors inductively in the cardinality of the support.\\
The expression for this norm can be adapted for the dual of any mixed Tsirelson space. In particular one can get an expression for the norm of the dual of Schlumprecht space.\\

	In section \ref{prelim} we introduce the notation and the results from Banach space theory we shall use. In section \ref{norm_def} we define a norm on $c_{00}$ and prove that Tsirelson's space $T$ is the completion of $c_{00}$ with respect to this norm. In section \ref{mixed} we will give an expression for the norm of the dual of any mixed Tsirelson space and show how our results from section \ref{norm_def} can be adapted to this case.

\section{Notation and preliminaries}\label{prelim}
	
	A standard reference on Schauder bases is \cite{Lind_Tza}, we follow the notation therein. The space $c_{00}$ is the space of finitely supported sequences of real numbers with the sup norm, the sequence $(e_n)_n$ denotes the standard Schauder basis for $c_{00}$.\\
	
We shall denote  the original Tsirelson space by $T$. Figiel and Johnson \cite{Tsi_Figiel} proved that there is a unique norm $\norm{\cdot}{F}$ on $c_{00}$ that satisfies the implicit equation
	\small
	\[
	\norm{x}{}=\max\left\{\norm{x}{\infty},\frac{1}{2}\max\left\{\sum_{i=1}^k \norm{E_ix}{}: k\in\N, k\leq E_1<\cdots<E_k\right\}\right\},
	\]
	and such that $\norm{e_i}{F}=1$ for all $i\in\N$. This follows from the fact that we can calculate the norm of a given vector in $c_{00}$ by calculating the norm of vectors with strictly smaller supports.\\
	 Let $F$ be the completion of $c_{00}$ with respect to this norm. The sequence $(e_n)_n$ is a Schauder basis for $F$ and $F$ is isometrically isomorphic to the dual of $T$. The norm $\norm{\cdot}{F}$ can be obtained as a limit of norms in the following way, for $x\in c_{00}$ define
	\small
	\begin{eqnarray*}
		\norm{x}{0}&=&\norm{x}{\infty}\\
		\norm{x}{n+1}&=& \max\left\{\norm{x}{n},\frac{1}{2}\max\left\{\sum_{i=1}^k \norm{E_ix}{n}: k\in\N, k\leq E_1<\cdots<E_k\right\}\right\}
	\end{eqnarray*}
	
	It can be proved that if we let $\norm{x}{F}=\lim_{n\to\infty}\norm{x}{n}$, then $\norm{\cdot}{F}$ satisfies the implicit equation above. \\

	In the course of our proof, we will use the Bipolar Theorem, its proof can be found in \cite[Section 3.4]{B_space_fund}. For this we need some additional notation.\\
	Let $X$ be a Banach space, and for $A\subset X$, $B\subset X^*$ we define
	\begin{eqnarray*}
	 	A^{\circ} &=& \{f\in X^*: \mbox{for all }x\in A, |f(x)|\leq 1\}\\
		B^{\circ} &=& \{x\in X: \mbox{for all }f\in B, |f(x)|\leq 1\}.
	\end{eqnarray*}
We shall need the following instance of the Bipolar Theorem:
	\begin{thm}
	 	(Alaouglu, Banach) Let $X$ be a Banach space. For every $A\subset X^*$, $A^{\circ\circ}$ is the weak*- closure of the convex hull of $A\cup\{0\}$.
	\end{thm}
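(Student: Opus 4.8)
The plan is to establish the two inclusions whose conjunction gives the asserted equality; throughout write $C$ for the weak*-closure of $\mathrm{conv}(A\cup\{0\})$, so that the goal is $C=A^{\circ\circ}$. First I would prove $C\subseteq A^{\circ\circ}$. The containment $A\cup\{0\}\subseteq A^{\circ\circ}$ is immediate: for $f\in A$ and any $x\in A^{\circ}$ the defining property of $A^{\circ}$ gives $|f(x)|\leq 1$, which is exactly the requirement for $f\in A^{\circ\circ}$, and $0\in A^{\circ\circ}$ trivially. Next I would note that $A^{\circ\circ}=\bigcap_{x\in A^{\circ}}\{f\in X^*:|f(x)|\leq 1\}$ is an intersection of convex, weak*-closed sets, since each evaluation $f\mapsto f(x)$ is weak*-continuous; hence $A^{\circ\circ}$ is itself convex and weak*-closed. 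Being a weak*-closed convex set containing $A\cup\{0\}$, it must contain $C$.

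For the reverse inclusion $A^{\circ\circ}\subseteq C$ I would argue by contradiction, and this is where the Alaoglu--Banach machinery enters. Suppose $f_0\in A^{\circ\circ}\setminus C$. Since $C$ is weak*-closed, convex and nonempty while $\{f_0\}$ is weak*-compact and disjoint from $C$, the geometric Hahn--Banach separation theorem in the locally convex space $(X^*,w^*)$ yields a weak*-continuous linear functional strictly separating $f_0$ from $C$. The structural fact I would invoke is that the weak*-continuous linear functionals on $X^*$ are exactly the evaluations $f\mapsto f(x)$ with $x\in X$; thus there exist $x\in X$ and a real $\gamma$ with $g(x)\leq\gamma<f_0(x)$ for every $g\in C$. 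Because $0\in C$ we have $\gamma\geq 0$, and we may arrange $\gamma>0$.

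The main obstacle is the passage from this one-sided separation to the two-sided bound needed to witness membership in $A^{\circ}$. The polars here are the absolute polars (defined with $|f(x)|\leq 1$), so to conclude $x/\gamma\in A^{\circ}$ I need $|g(x)|\leq\gamma$ rather than merely $g(x)\leq\gamma$. This is exactly where symmetry enters: in the situations to which the theorem is applied the set $A$ is symmetric, hence $C$ is balanced, so $g\in C$ implies $-g\in C$ and the separation upgrades to $|g(x)|\leq\gamma$ for all $g\in C$, in particular for all $g\in A$. Setting $y=x/\gamma$ gives $|g(y)|\leq 1$ for every $g\in A$, i.e.\ $y\in A^{\circ}$; but $f_0\in A^{\circ\circ}$ then forces $|f_0(y)|\leq 1$, that is $|f_0(x)|\leq\gamma$, contradicting $f_0(x)>\gamma$. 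This contradiction gives $A^{\circ\circ}\subseteq C$ and completes the argument. The two steps I expect to require the most care are the identification of the weak*-dual of $X^*$ with $X$ and this symmetry step that converts a separating half-space into the absolute-value condition defining the polar.
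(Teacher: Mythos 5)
The paper does not prove this theorem at all --- it is quoted as a known result (Alaoglu--Banach / Bipolar Theorem) with a citation to \cite{B_space_fund}, so there is no internal proof to compare against; your argument is essentially the textbook proof of that cited result. The easy inclusion ($A^{\circ\circ}$ is weak*-closed, convex, and contains $A\cup\{0\}$, hence contains $C$) and the separation argument (Hahn--Banach in $(X^*,w^*)$ plus the identification of the weak*-continuous functionals on $X^*$ with evaluations at points of $X$) are both correct as written. You are also right to flag the symmetry step as the delicate point, and in fact you have put your finger on a small imprecision in the statement itself: with the \emph{absolute} polar $A^{\circ}=\{x:|f(x)|\leq 1\ \forall f\in A\}$ used in the paper, $A^{\circ\circ}$ is automatically balanced, so for a non-symmetric $A$ (e.g.\ $A=\{1\}$ in $X^*=\mathbb{R}$, where $A^{\circ\circ}=[-1,1]$ but the closed convex hull of $A\cup\{0\}$ is $[0,1]$) the conclusion as literally stated fails; the correct general conclusion is the weak*-closed \emph{absolutely} convex hull. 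Your proof therefore establishes the theorem exactly in the generality in which the paper uses it, since the set $V$ to which Proposition \ref{char_ball} applies the theorem satisfies $V=-V$ (each $V_n$ is symmetric by induction), so its convex hull with $0$ is already balanced and your upgrade from $g(x)\leq\gamma$ to $|g(x)|\leq\gamma$ goes through. If you wanted a proof of the statement for arbitrary $A$ you would either restate it with the absolutely convex hull, or replace $C$ by the weak*-closed balanced convex hull before separating; as a verification of what the paper actually needs, your argument is complete.
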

	
Given a Banach space $X$ with a shrinking basis $(x_n)_n$, we can identify $f\in X^*$ with the sequence of scalars $(a_n)_n$ such that $f=\sum a_n x_n^*$. It is clear that if $(f^m)_m$ is a sequence in $X^*$ with $f^m=\sum a_n^m x_n^*$ and such that $f^m$ converges to $f$ with respect to the weak*-topology, then $f=\sum b_n x_n^*$ where $\lim_{m\to\infty}a_n^m=b_n$. The following proposition shows that in fact, the weak*- topology and the topology of pointwise convergence coincide in the unit ball of $X^*$.

\begin{prop}\label{weak_pw}
 	let $X$ be a Banach space with a shrinking basis $(x_n)_n$. Let $(f^m)_m\subset B_{X^*}$ where $f^m=\sum a_n^m e_n^*$, be such that for each $n\in\N$, $\lim_{m\to\infty}a_n^m=b_n$. Let $f=\sum b_n e_n^*\in X^*$ then $f^m$ converges to $f$ with respect to the weak*-topology.
\end{prop}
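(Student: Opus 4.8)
The plan is to unwind the definition of weak*-convergence: I need to show that for every $x \in X$ we have $f^m(x) \to f(x)$ as $m \to \infty$. The standard route is a three-$\varepsilon$ argument combining convergence on a dense subset of $X$ with the uniform bound $\|f^m\| \le 1$ coming from the hypothesis $(f^m)_m \subset B_{X^*}$. Since the proposition is stated for a shrinking basis, I will freely use that $(x_n^*)_n$ is then a basis of $X^*$, which is what makes the coordinate identification $f = \sum b_n x_n^*$ legitimate in the first place.

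First I would record what happens on the basis itself. Because $(x_n^*)_n$ is biorthogonal to $(x_n)_n$, we have $f^m(x_k) = a_k^m$ and $f(x_k) = b_k$, so the hypothesis $a_k^m \to b_k$ says precisely that $f^m(x_k) \to f(x_k)$ for every $k \in \N$. By linearity this extends to $f^m(y) \to f(y)$ for every $y$ in the linear span $Y = \mathrm{span}\{x_n : n \in \N\}$. The same computation also confirms $\|f\| \le 1$: for $y \in Y$ one has $|f(y)| = \lim_m |f^m(y)| \le \sup_m \|f^m\|\,\|y\| \le \|y\|$, and since $Y$ is dense in $X$ (being the span of a Schauder basis) this estimate shows $f$ is the continuous functional of norm at most $1$ whose coordinate sequence is $(b_n)_n$.

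Now I would fix $x \in X$ and $\varepsilon > 0$ and, using density of $Y$, choose $y \in Y$ with $\|x - y\| < \varepsilon$. Splitting
\[
|f^m(x) - f(x)| \le |f^m(x - y)| + |f^m(y) - f(y)| + |f(y - x)|,
\]
the first and third terms are bounded by $\|f^m\|\,\|x - y\| \le \varepsilon$ and $\|f\|\,\|y - x\| \le \varepsilon$ respectively, using the uniform bound on $\|f^m\|$ and $\|f\| \le 1$; the middle term tends to $0$ by the previous paragraph, hence lies below $\varepsilon$ for all large $m$. Thus $\limsup_m |f^m(x) - f(x)| \le 3\varepsilon$, and letting $\varepsilon \to 0$ gives $f^m(x) \to f(x)$, which is exactly weak*-convergence.

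I do not expect a genuine obstacle here; the argument is routine once the uniform bound is in hand. The single point that must not be skipped is precisely that bound $\|f^m\| \le 1$: without the hypothesis $(f^m)_m \subset B_{X^*}$, the pointwise convergence on the dense span $Y$ would fail to transfer to all of $X$, because the tail term $|f^m(x-y)|$ could not be controlled uniformly in $m$. The shrinking hypothesis, by contrast, enters only through the representation $f = \sum b_n x_n^*$ and plays no role in the convergence estimate itself.
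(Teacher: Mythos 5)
Your proof is correct. The paper states Proposition \ref{weak_pw} without giving any proof, and your argument --- biorthogonality giving $f^m(x_k)=a_k^m\to b_k=f(x_k)$, hence convergence on the dense span of the basis, combined with the uniform bound $\|f^m\|\le 1$ in a three-$\varepsilon$ estimate --- is exactly the standard argument the paper implicitly relies on, including the correct observation that the shrinking hypothesis enters only to justify the representation $f=\sum b_n x_n^*$.
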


\section{Definition of the norm}\label{norm_def}	
	As we saw in the previous section, the norm for $T^*$ can be obtained by taking the limit of a sequence of norms. To get an expression for the norm of $T$ we shall take the limit of a sequence of positive scalar functions on $c_{00}$. In this case each scalar function is not a norm but the sequence is defined in such a way that the pointwise limit is a norm on $c_{00}$,\\
	 
	\begin{defi}\label{norm} For $x\in c_{00}$ let
	\begin{eqnarray*}
		\rho_0(x)&=&||x||_{\ell_1}\\
		\rho_{n+1}(x)&=& \min\{2 \min\{\underset{1\leq i\leq k}{\max}\pnorm{E_ix}{n}:k\leq E_1<\cdots <E_k, x=\sum_{i=1}^k E_i x\},\\
		&&\inf\{\rho_n(w^1)+\rho_n(w^2):x=w^1+w^2\}\}\\
		||x||&=&\lim_{n\to\infty}\rho_n(x)
	\end{eqnarray*}
	\end{defi}
	\begin{lem}\label{isnorm}
		The function $||\cdot||$ defines a norm on $c_{00}$ and whenever $x=(x_n)$, $y=(y_n)\in c_{00}$ are such that $|y_n|\leq |x_n|$, we have $||y||\leq ||x||$.
	\end{lem}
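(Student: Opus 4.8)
The plan is to prove, by induction on $n$, a package of pointwise facts about the functions $\pnorm{\cdot}{n}$ and then let $n\to\infty$. Simultaneously I would show that for every $x=(x_k)\in c_{00}$ and every scalar $\lambda$: (a) $\pnorm{x}{n}\geq 0$ and $\pnorm{0}{n}=0$; (b) $\pnorm{\lambda x}{n}=|\lambda|\,\pnorm{x}{n}$; (c) $\pnorm{x}{n}\geq\norm{x}{\infty}$; and (d) if $|y_k|\leq|x_k|$ for all $k$ then $\pnorm{y}{n}\leq\pnorm{x}{n}$. The base case $n=0$ is immediate from $\pnorm{x}{0}=||x||_{\ell_1}$. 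Once (a) is available, choosing the trivial splitting $x=x+0$ in the infimum term gives $\pnorm{x}{n+1}\leq\pnorm{x}{n}+\pnorm{0}{n}=\pnorm{x}{n}$; combined with (a) this shows that $(\pnorm{x}{n})_n$ is nonincreasing and bounded below, so the limit defining $||x||$ exists.

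For the inductive step I would handle the two terms in the definition of $\pnorm{x}{n+1}$ separately. Homogeneity (b) is routine (the case $\lambda=0$ being covered by (a)): for $\lambda\neq0$, multiplication by $\lambda$ maps admissible block decompositions $x=\sum_{i=1}^k E_ix$ and additive splittings $x=w^1+w^2$ bijectively to those of $\lambda x$, and the inductive hypothesis converts each $\rho_n$ into $|\lambda|\rho_n$. For the lower bound (c), fix $k_0$ with $|x_{k_0}|=\norm{x}{\infty}$; in any admissible family the block $E_{i_0}$ containing $k_0$ satisfies $\pnorm{E_{i_0}x}{n}\geq\norm{E_{i_0}x}{\infty}\geq\norm{x}{\infty}$ by induction, so the block term is at least $2\norm{x}{\infty}$, while for any splitting $x=w^1+w^2$ the inductive hypothesis together with $\norm{w^1}{\infty}+\norm{w^2}{\infty}\geq\norm{x}{\infty}$ makes the infimum term at least $\norm{x}{\infty}$.

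The delicate point is the coordinate monotonicity (d), which I would again prove termwise. Because $|y_k|\leq|x_k|$ forces $\mathrm{supp}(y)\subseteq\mathrm{supp}(x)$, every admissible family with $x=\sum_{i=1}^k E_ix$ also satisfies $y=\sum_{i=1}^k E_iy$ and $|(E_iy)_k|\leq|(E_ix)_k|$; the inductive hypothesis then gives $\max_i\pnorm{E_iy}{n}\leq\max_i\pnorm{E_ix}{n}$ for each such family, and since every admissible family for $x$ is admissible for $y$, minimizing yields that the block term of $y$ is at most that of $x$. For the infimum term, given a splitting $x=w^1+w^2$ I would set $\lambda_k=y_k/x_k$ when $x_k\neq0$ and $\lambda_k=0$ otherwise, so that $|\lambda_k|\leq1$, and define $v^i_k=\lambda_k w^i_k$; then $v^1+v^2=y$ and $|v^i_k|\leq|w^i_k|$, so the inductive hypothesis yields $\pnorm{v^1}{n}+\pnorm{v^2}{n}\leq\pnorm{w^1}{n}+\pnorm{w^2}{n}$ and hence the infimum for $y$ does not exceed that for $x$. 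This coordinatewise rescaling, which manufactures admissible splittings of $y$ out of those of $x$, is the one genuinely nonobvious step.

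With (a)--(d) established at every level, the conclusion follows by passing to the limit: positive homogeneity and the domination property persist, and (c) gives $||x||\geq\norm{x}{\infty}$, so $||x||=0$ forces $x=0$, while nonnegativity and $||0||=0$ are clear. The triangle inequality is built into the infimum term: the splitting $x+y=x+y$ gives $\pnorm{x+y}{n+1}\leq\pnorm{x}{n}+\pnorm{y}{n}$, and letting $n\to\infty$ yields $||x+y||\leq||x||+||y||$. Thus $||\cdot||$ is a norm on $c_{00}$ that is monotone with respect to the coordinatewise order on absolute values.
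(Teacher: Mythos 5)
Your proof is correct and takes essentially the same route as the paper: an induction on $n$ establishing coordinatewise monotonicity of each $\rho_n$ (your rescaled splitting $v^i_k=\lambda_k w^i_k$ just makes explicit the dominated decomposition of $y$ that the paper asserts exists), followed by passing to the limit in $\pnorm{x+y}{n+1}\leq\pnorm{x}{n}+\pnorm{y}{n}$ for the triangle inequality. The only cosmetic differences are that you carry the lower bound $\pnorm{x}{n}\geq\norm{x}{\infty}$ explicitly through the induction, where the paper deduces it tersely from monotonicity, and you take the limit directly where the paper runs a slightly more careful two-step estimate.
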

	\begin{proof}
		Let $x=(x_n), y=(y_n)\in c_{00}$ be such that $|y_n|\leq |x_n|$. We prove by induction on $n$ that $\pnorm{y}{n}\leq\pnorm{x}{n}$. The base case is clear so suppose the inequality holds for $n$. Let $k\leq E_1<\cdots <E_k$ be such that $x=\sum E_i x$. Note that since $\mathrm{supp}(y)\subseteq \mathrm{supp} (x)$, we have that $y=\sum E_i y$.\\
	By induction hypothesis $\underset{1\leq i\leq n}{\max}\pnorm{E_iy}{n}\leq \underset{1\leq i\leq n}{\max}\pnorm{E_ix}{n}$ so 
	\[
		\min\left\{\underset{1\leq i\leq n}{\max}\pnorm{E_iy}{n}:k\leq E_1<\cdots <E_k, y=\sum E_i y\right\}\leq \underset{1\leq i\leq n}{\max}\pnorm{E_ix}{n}.
	\]
	Since $k\leq E_1<\cdots <E_k$ was arbitrary, we have that 
	\[
		\pnorm{y}{n+1}\leq 2 \min\left\{\underset{1\leq i\leq n}{\max}\pnorm{E_ix}{n}:k\leq E_1<\cdots <E_k, x=\sum E_i x\right\}.
	\] 
	Let $x^1,x^2$ be such that $x=x^1+x^2$ , then we can find $y^1,y^2$ such that $y=y^1+y^2$ and $|y^i_n|\leq |x^i_n|$, hence 
	\[
		\pnorm{y}{n+1}\leq \inf\{\pnorm{w^1}{n}+\pnorm{w^2}{n}:x=w^1+w^2\}.
	\]
	It follows that $\pnorm{y}{n+1}\leq\pnorm{x}{n+1}$. Therefore for all $n\in\N$, $\pnorm{y}{n}\leq\pnorm{x}{n}$ and it follows that $||y||\leq||x||$.\\
	This monotonicity implies that $||\cdot ||$ is bounded below by the sup norm. Now we prove that $||\cdot||$ defines a norm.\\

	We summarize the properties of the sequence $(\rho_n)_n$ that will be used in the proof:
	\begin{itemize}
		\item [(i)] For all $x\in c_{00}$, $(\pnorm{x}{n})_n$ is monotone decreasing,\\
		\item [(ii)] For $x,y\in c_{00}$, $\pnorm{x+y}{n+1}\leq\pnorm{x}{n}+\pnorm{y}{n}$.
		\item [(iii)] For any $\lambda\in\mathbb{R}$ and $x\in c_{00}$, $\pnorm{\lambda x}{n}=|\lambda|\pnorm{x}{n}$.
	\end{itemize}
	The only non trivial property we must verify is the triangle inequality. Let $x,y\in c_{00}$, first we prove that for all $m\in\N$, 
	\begin{equation}
		||x+y||-||x||\leq\pnorm{y}{m}.\label{1}
	\end{equation}
	Let $m\in\N$, let $n>m$ then
	\[
		||x+y||\leq\pnorm{x+y}{n+1}\leq\pnorm{x}{n}+\pnorm{y}{n}\leq\pnorm{x}{n}+\pnorm{y}{m},
	\]
	therefore $||x+y||-\pnorm{y}{m}\leq\pnorm{x}{n}$ and this holds for all $n>m$ so 
	\[
		||x+y||-\pnorm{y}{m}\leq\underset{n>m}{\inf}\pnorm{x}{n}=||x||,
	\]
	hence $||x+y||-||x||\leq\pnorm{y}{m}$.\\
	Since \eqref{1} holds for all $m\in\N$, we have that $||x+y||-||x||\leq\underset{m}{\inf}\pnorm{y}{m}=||y||$. Hence $||\cdot||$ defines a norm on $c_{00}$.
	\end{proof}
We will now see that the norm $\no$ satisfies an implicit expression, dual to that found by Figiel and Johnson.
\begin{prop}\label{prop_min}
 	For any $x\in c_{00}$, we have that
	\begin{eqnarray*}
	||x||&=& \min\{2 \min\{\underset{1\leq i\leq k}{\max}\norm{E_ix}{}:k\leq E_1<\cdots <E_k, x=\sum E_i x\},\\
		&&\inf\{\norm{y}{}+\norm{z}{}:x=y+z, \mathrm{supp} (y)\subseteq \mathrm{supp} (x)\}\}. 
	\end{eqnarray*}
	Furthermore, if $||\cdot||'$ is a norm satisfying the implicit equation above and such that $||e_i||'=1$ for every vector $e_i$ in the standard basis of $c_{00}$, then for all $x\in c_{00}$, $||x||'\leq ||x||$.	
\end{prop}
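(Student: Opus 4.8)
The plan is to establish the two assertions separately. For the implicit equation I would write the right-hand side as $\min\{A,B\}$ with
\[
A=2\min\left\{\underset{1\leq i\leq k}{\max}\norm{E_ix}{}:k\leq E_1<\cdots <E_k,\ x=\sum E_ix\right\},\quad B=\inf\{\norm{y}{}+\norm{z}{}:x=y+z,\ \mathrm{supp}(y)\subseteq\mathrm{supp}(x)\}.
\]
The first point is that $B=\norm{x}{}$ for \emph{any} norm: the triangle inequality gives $\norm{x}{}\leq\norm{y}{}+\norm{z}{}$ for every admissible decomposition, while the trivial choice $x=x+0$ (for which $\mathrm{supp}(x)\subseteq\mathrm{supp}(x)$) shows $B\leq\norm{x}{}$. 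Hence it suffices to prove $A\geq\norm{x}{}$, for then $\min\{A,B\}=\min\{A,\norm{x}{}\}=\norm{x}{}$.

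The inequality $A\geq\norm{x}{}$ is the statement that $\norm{x}{}\leq 2\,\underset{1\leq i\leq k}{\max}\norm{E_ix}{}$ for every admissible partition with $x=\sum E_ix$. I would read this off the definition of $(\rho_n)_n$: keeping a single partition in the inner minimum and discarding the decomposition term in the outer minimum yields $\pnorm{x}{n+1}\leq 2\,\underset{1\leq i\leq k}{\max}\pnorm{E_ix}{n}$ for each $n$. Letting $n\to\infty$, and using that the maximum runs over finitely many indices with each $\pnorm{E_ix}{n}$ decreasing to $\norm{E_ix}{}$, gives $\norm{x}{}\leq 2\,\underset{1\leq i\leq k}{\max}\norm{E_ix}{}$. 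Minimizing over partitions gives $A\geq\norm{x}{}$, which finishes the first part.

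For maximality, let $||\cdot||'$ be a norm obeying the implicit equation with $||e_i||'=1$ for all $i$. I would show by induction on $n$ that $||x||'\leq\pnorm{x}{n}$ for every $x\in c_{00}$, so that letting $n\to\infty$ gives $||x||'\leq\norm{x}{}$. The base case is the triangle inequality: $||x||'=||\sum_i x_ie_i||'\leq\sum_i|x_i|\,||e_i||'=\norm{x}{\ell_1}=\pnorm{x}{0}$. For the inductive step, assume $||\cdot||'\leq\rho_n$ pointwise. Since $||x||'$ equals the minimum in the implicit equation, it is bounded by $2\,\underset{1\leq i\leq k}{\max}||E_ix||'$ for every admissible partition; the induction hypothesis upgrades this to $||x||'\leq 2\,\underset{1\leq i\leq k}{\max}\pnorm{E_ix}{n}$, and minimizing over partitions bounds the first term of $\pnorm{x}{n+1}$. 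For the second term, the triangle inequality and the induction hypothesis give $||x||'\leq||w^1||'+||w^2||'\leq\pnorm{w^1}{n}+\pnorm{w^2}{n}$ for every decomposition $x=w^1+w^2$, so $||x||'$ does not exceed the infimum either. Hence $||x||'\leq\pnorm{x}{n+1}$.

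The steps are largely routine. The two points that need care are the interchange of the limit with the finite maximum (valid because each sequence $(\pnorm{E_ix}{n})_n$ is monotone decreasing and convergent) and, in the maximality argument, the fact that the implicit equation supplies the one-term bound $||x||'\leq 2\,\underset{1\leq i\leq k}{\max}||E_ix||'$ with no circularity, since this uses only that a minimum is dominated by each of its terms rather than the full self-referential identity. I expect the induction in the maximality part to be the principal obstacle, though it is really bookkeeping once that one-term bound is isolated.
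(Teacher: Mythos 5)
Your proposal is correct and follows essentially the same route as the paper: the infimum term equals $\norm{x}{}$ by the triangle inequality, the bound $\norm{x}{}\leq 2\max_i\norm{E_ix}{}$ comes from $\pnorm{x}{n+1}\leq 2\max_i\pnorm{E_ix}{n}$ by passing to the limit, and maximality is the induction $||\cdot||'\leq\rho_n$. The only (harmless) cosmetic difference is that you pass to the limit using continuity of the finite maximum, whereas the paper fixes an index attaining the maximum along a cofinal set of $n$'s.
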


\begin{proof}
	Since $\no$ satisfies the triangle inequality, $||x||=\inf\{\norm{y}{}+\norm{z}{}:x=y+z, \mathrm{supp} (y)\subseteq \mathrm{supp} (x)\}$. So we have to check that
	\[
		||x||\leq 2 \min\{\underset{1\leq i\leq k}{\max}\norm{E_ix}{}:k\leq E_1<\cdots <E_k, x=\sum E_i x\}.
	\]
 	Let $k\leq E_1<\cdots <E_k$ be such that $x=\sum E_i x$, and define 
	\[
		J=\{j\leq k:\underset{1\leq i\leq k}{\max}\norm{E_ix}{}=||E_jx||\}.
	\]
	Let $j_0\in J$ be such that for some cofinal $C\subseteq\N$ we have that for $n\in C$, $\underset{1\leq i\leq k}{\max}\pnorm{E_ix}{n}=\pnorm{E_{j_0}x}{n}$.\\
	Then for $n\in C$
	\[
		||x||\leq\pnorm{x}{n+1}\leq 2\underset{1\leq i\leq k}{\max}\pnorm{E_ix}{n}=2\pnorm{E_{j_0}x}{n}.
	\]
	So $||x||\leq 2\norm{E_{j_0}x}{}=2\underset{1\leq i\leq k}{\max}\norm{E_ix}{}$.\\
	Now suppose $||\cdot||'$ is a norm on $c_{00}$ that satisfies the implicit equation and such that $||e_i||'=1$ for every vector $e_i$ in the standard basis of $c_{00}$. It follows easily by induction that for all $n\in\N$ and all $x\in c_{00}$, $||x||'\leq\rho_n(x)$. Hence for all $x\in c_{00}$, $||x||'\leq ||x||$.
\end{proof}

Unlike the expression given by Figiel and Johnson, this implicit expression does not allow us to calculate the norm of a vector recursively on the cardinality of its support. This is because of the infimum term in each $\rho_n$, this term is necessary in order to have the triangle inequality hold in the limit.\\

Since $T$ is reflexive and $T^*=F$, the space $T$ is isometrically isomorphic to the dual of $F$. We will prove that the norm $\no$ we defined and the norm of $F^*$ coincide in $c_{00}$.\\

We observed before that the sequence $(e_n)_n$ is a Schauder basis for $F$, let $(e_n^*)_n$ be the corresponding coefficient functionals. We shall define a subset of $F^*$ that contains all the information needed to calculate the norm of a given vector $x\in F$. Let
	\begin{eqnarray*}
		V_0 &=& \{\pm e_k^*: k\in\N\}\\
		V_{n+1} &=& V_n\cup \left\{\frac{1}{2}(f_1+\cdots +f_k): k\in\N, k\leq f_1<\cdots<f_k, f_i\in V_n\right\}\\
		V &=& \bigcup_n V_n.
	\end{eqnarray*}

	\begin{prop}\label{det}
 		For every $x\in F$, $\norm{x}{F}=sup\{f(x):f\in V\}$.
	\end{prop}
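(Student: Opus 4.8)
The plan is to prove first, for vectors in $c_{00}$, the level-by-level identity
\[
\norm{x}{n}=\sup\{f(x):f\in V_n\}\qquad(x\in c_{00},\ n\in\N),
\]
by induction on $n$, and then to pass to the limit and extend to all of $F$. Write $g_n(x)=\sup\{f(x):f\in V_n\}$. The base case is immediate: $g_0(x)=\sup_k\{\pm e_k^*(x)\}=\norm{x}{\infty}=\norm{x}{0}$. Before the inductive step I would record two structural facts about the $V_n$, each proved by a short induction. (a) Each $V_n$ is closed under coordinate restriction: if $f\in V_n$ and $E\subseteq\N$, then the functional $f|_E$ obtained by deleting the coordinates outside $E$ lies in $V_n\cup\{0\}$; the only thing to check is that erasing blocks from $\tfrac12(f_1+\cdots+f_k)$ keeps the admissibility condition, since the surviving first block has minimal support $\ge\min\text{supp}(f_1)\ge k$, which is at least the new number of blocks. (b) For a finite set $S$ the family $\{f\in V_n:\text{supp}(f)\subseteq S\}$ is finite, so every supremum over $V_n$ of an evaluation at a fixed finitely supported vector is attained; combined with (a) this gives $g_n(x)=\max\{f(x):f\in V_n,\ \text{supp}(f)\subseteq\text{supp}(x)\}$.

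For the inductive step, assume $g_n=\norm{\cdot}{n}$. Splitting off the old functionals,
\[
g_{n+1}(x)=\max\Big\{g_n(x),\ \tfrac12\sup\big\{\textstyle\sum_{i=1}^k f_i(x):k\le f_1<\cdots<f_k,\ f_i\in V_n\big\}\Big\},
\]
so it suffices to identify the second entry with $\tfrac12\max\{\sum_{i=1}^k\norm{E_ix}{n}:k\le E_1<\cdots<E_k\}$. For ``$\le$'', given an admissible chain $f_1<\cdots<f_k$ put $E_i=\text{supp}(f_i)$; then $k\le E_1<\cdots<E_k$ and $f_i(x)=f_i(E_ix)\le g_n(E_ix)=\norm{E_ix}{n}$ by the induction hypothesis. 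For ``$\ge$'', given admissible $E_1<\cdots<E_k$, use (b) to pick $f_i\in V_n$ with $\text{supp}(f_i)\subseteq E_i$ and $f_i(x)=\norm{E_ix}{n}$ (discarding the indices with $E_ix=0$); these $f_i$ are again admissible since $k\le\min E_1\le\min\text{supp}(f_1)$, and $\sum f_i(x)=\sum\norm{E_ix}{n}$. Thus $g_{n+1}=\norm{\cdot}{n+1}$, closing the induction. Since $(\norm{x}{n})_n$ is nondecreasing with limit $\norm{x}{F}$ and $V=\bigcup_nV_n$, this already gives $\norm{x}{F}=\sup\{f(x):f\in V\}$ for every $x\in c_{00}$.

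It remains to pass from $c_{00}$ to $F$. Each $f\in V$ is finitely supported and satisfies $f(y)\le\norm{y}{F}$ for $y\in c_{00}$, so by density $f\in B_{F^*}$ and therefore $\sup\{f(x):f\in V\}\le\norm{x}{F}$ for every $x\in F$. For the reverse inequality I would use the basis projections $P_Nx\to x$: since $P_Nx\in c_{00}$ and, by (a), every $f\in V$ may be replaced by its restriction $f|_{\{1,\dots,N\}}$ without changing $f(P_Nx)$, we get
\[
\norm{P_Nx}{F}=\sup\{f(P_Nx):f\in V\}=\sup\{f(x):f\in V,\ \text{supp}(f)\subseteq\{1,\dots,N\}\}\le\sup\{f(x):f\in V\};
\]
letting $N\to\infty$ and using $\norm{P_Nx}{F}\to\norm{x}{F}$ finishes the proof.

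The hard part will be the bookkeeping in the inductive step: checking that restricting a functional to a coordinate set keeps it inside $V_n$ while preserving the admissibility constraint $k\le f_1$, and aligning the block decomposition $E_1<\cdots<E_k$ with the supports of the chosen functionals in both inequalities. Once the level-$n$ identity is established, the convergence to $\norm{\cdot}{F}$ and the extension to all of $F$ are routine, resting only on the finiteness in (a)--(b) and on the fact that members of $V$ are finitely supported.
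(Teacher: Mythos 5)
Your proposal is correct and takes essentially the same route as the paper: an induction on $n$ showing $\sup\{f(x):f\in V_n\}=\norm{x}{F,n}$ for $x\in c_{00}$, followed by passage to the limit. The paper simply asserts the inductive identity as easy, whereas you supply the supporting details (closure of $V_n$ under restrictions, finiteness of $\{f\in V_n:\mathrm{supp}(f)\subseteq S\}$, and the density argument extending the identity from $c_{00}$ to all of $F$), all of which check out.
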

	\begin{proof}
		Let $(\norm{\cdot}{F,n})_n$ be the sequence of norms defined by Figiel and Johnson. For each $n\in\N$ and $x\in c_{00}$ define $\tau_n(x)=\mbox{sup}\{f(x):f\in V_n\}$. It is easy to prove  by induction on $n$ that $\tau_n(x)=\norm{x}{F,n}$ for every $x\in c_{00}$. Therefore
		\[
 			\norm{x}{F}=\lim_{n\to\infty}\norm{x}{F,n}=\lim_{n\to\infty}\tau_n(x)=\mbox{sup}\{f(x):x\in V\}.
	\]
	\end{proof}

We are now ready to use the Bipolar Theorem to prove the following
	\begin{prop}\label{char_ball}
	 The unit ball of the dual of $F$ is the weak*-closure of the convex hull of $V\cup\{0\}$.
	\end{prop}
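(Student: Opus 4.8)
The plan is to apply the Bipolar Theorem (the Alaoglu--Banach instance stated above) with $X=F$ and $A=V\subseteq F^*$. The theorem immediately gives that $V^{\circ\circ}$ is the weak*-closure of the convex hull of $V\cup\{0\}$, so the entire proposition reduces to the single identification $V^{\circ\circ}=B_{F^*}$. Everything then comes down to computing the two polars in turn.

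First I would record that $V$ is symmetric, i.e. $f\in V$ implies $-f\in V$. This is an easy induction: $V_0=\{\pm e_k^*\}$ is symmetric, and if $V_n$ is symmetric then for $f_1<\cdots<f_k$ in $V_n$ we also have $-f_1<\cdots<-f_k$ in $V_n$, so $-\tfrac12(f_1+\cdots+f_k)=\tfrac12((-f_1)+\cdots+(-f_k))\in V_{n+1}$. The symmetry lets me pass from the supremum in Proposition \ref{det} to an absolute value: for every $x\in F$,
\[
	\sup\{|f(x)|:f\in V\}=\sup\{f(x):f\in V\}=\norm{x}{F}.
\]
Consequently $V^{\circ}=\{x\in F:|f(x)|\leq 1\text{ for all }f\in V\}=\{x\in F:\norm{x}{F}\leq 1\}=B_F$. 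This step, resting on Proposition \ref{det} together with the symmetry just established, is the substantive content of the argument; the rest is formal.

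It then remains to compute the second polar $V^{\circ\circ}=(B_F)^{\circ}$. By the definition of the polar of a subset of $X=F$, this is $\{f\in F^*:|f(x)|\leq 1\text{ for all }x\in B_F\}$, and since $\sup_{x\in B_F}|f(x)|=\norm{f}{F^*}$ is precisely the dual norm, we obtain $(B_F)^{\circ}=B_{F^*}$. Combining this with the Bipolar Theorem yields
\[
	B_{F^*}=V^{\circ\circ}=\overline{\mathrm{conv}}^{\,w*}(V\cup\{0\}),
\]
which is the claim. I expect no genuine obstacle beyond the first polar computation: the only place where real information about $F$ enters is the equality $V^{\circ}=B_F$, and once Proposition \ref{det} and the symmetry of $V$ are in hand, that equality is forced.
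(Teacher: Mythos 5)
Your proposal is correct and follows essentially the same route as the paper: identify $V^{\circ}=B_F$ via Proposition \ref{det}, note $(B_F)^{\circ}=B_{F^*}$, and apply the Bipolar Theorem. The only difference is that you explicitly verify the symmetry of $V$ needed to pass from the supremum in Proposition \ref{det} to the absolute values in the definition of the polar, a detail the paper leaves implicit.
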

	\begin{proof}
	 	By proposition \ref{det}, $V^{\circ}=B_F$ and $V^{\circ\circ}=(B_F)^{\circ}= B_{F^*}$. Hence by the Bipolar theorem, we have that $B_{F^*}$ is the weak*- closure of the convex hull of $V\cup\{0\}$.
	\end{proof}
	In order to prove that $X$ is the dual of $F$, we need the following lemma:
	\begin{lem}
	 	For $x\in c_{00}$, if $\pnorm{x}{n}\leq 1$ then $x\in B_{F^*}$.
	\end{lem}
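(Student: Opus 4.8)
The plan is to prove by induction on $n$ that $\pnorm{x}{n}\leq 1$ implies $x\in B_{F^*}$, viewing each $x\in c_{00}$ as the functional $\sum_j x_j e_j^*\in F^*$. Throughout I will use only that, by Proposition \ref{char_ball}, the set $B_{F^*}$ is weak*-closed, convex, contains $0$, and contains each $\pm e_k^*$.

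The first thing I would isolate is the one structural property of $B_{F^*}$ that drives the induction: if $g_1,\dots,g_k\in B_{F^*}$ are supported on sets $E_1<\cdots<E_k$ with $k\leq E_1$, then $\tfrac12\sum_{i=1}^k g_i\in B_{F^*}$. This is a short duality computation, and it is exactly the place where the two implicit equations are dual to one another. For any $y\in B_F$, since $g_i$ is supported on $E_i$ we have $g_i(y)=g_i(E_iy)$ and $|g_i(E_iy)|\leq\norm{E_iy}{F}$, so $\tfrac12\bigl|\sum_{i=1}^k g_i(y)\bigr|\leq\tfrac12\sum_{i=1}^k\norm{E_iy}{F}\leq\norm{y}{F}\leq 1$, where the middle inequality is precisely the Figiel--Johnson implicit equation for $\norm{\cdot}{F}$ applied to the admissible family $E_1<\cdots<E_k$. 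Hence $\tfrac12\sum_{i=1}^k g_i\in B_{F^*}$.

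For the base case $n=0$, the bound $\pnorm{x}{0}=\norm{x}{\ell_1}\leq 1$ lets me write $x=\sum_j|x_j|\,(\mathrm{sign}(x_j)e_j^*)+\bigl(1-\sum_j|x_j|\bigr)\cdot 0$, a finite convex combination of points of $V_0\cup\{0\}\subseteq B_{F^*}$, so $x\in B_{F^*}$. For the inductive step I assume the statement for $n$ and take $\pnorm{x}{n+1}\leq 1$; by Definition \ref{norm} at least one of the two terms in the minimum is $\leq 1$. If the first term is $\leq 1$, then since $x$ has finite support the minimum over admissible decompositions is attained, giving $E_1<\cdots<E_k$ with $k\leq E_1$, $x=\sum_{i=1}^k E_ix$ and $\max_i\pnorm{E_ix}{n}\leq\tfrac12$; by homogeneity $\pnorm{2E_ix}{n}\leq 1$, so the induction hypothesis gives $2E_ix\in B_{F^*}$, and the structural property above applied to $g_i=2E_ix$ yields $x=\tfrac12\sum_{i=1}^k(2E_ix)\in B_{F^*}$. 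If instead the infimum term is $\leq 1$, then for every $\delta>0$ I choose $x=w^1+w^2$ with $\pnorm{w^1}{n}+\pnorm{w^2}{n}<1+\delta$; normalizing each nonzero summand by its $\rho_n$-value and invoking the induction hypothesis exhibits $x/(1+\delta)$ as a convex combination of two members of $B_{F^*}$, so $x\in(1+\delta)B_{F^*}$, and letting $\delta\to 0$ with $B_{F^*}$ weak*-closed gives $x\in B_{F^*}$.

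The hard part is the structural property in the second paragraph: the remaining steps are essentially bookkeeping with convexity and homogeneity, but that step is where the admissibility condition $k\leq E_1<\cdots<E_k$ and the Figiel--Johnson equation must be used in tandem, and it is precisely the hinge on which the duality between the two implicit norms turns.
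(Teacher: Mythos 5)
Your proof is correct, and its outer skeleton (induction on $n$; base case by writing $x$ as a convex combination of $\pm e_j^*$ and $0$; first branch via the halved-sum property applied to $2E_ix$; second branch via near-optimal splittings, convexity, and closedness of the ball) coincides with the paper's. Where you genuinely depart from the paper is in how you establish the key structural property that $\tfrac12(g_1+\cdots+g_k)\in B_{F^*}$ whenever $g_1,\dots,g_k\in B_{F^*}$ are supported on an admissible family $k\leq E_1<\cdots<E_k$. The paper gets this (its property (iii)) combinatorially from the description of $B_{F^*}$ as the pointwise closure of $\mathrm{conv}(V\cup\{0\})$: it checks that $V$ is closed under the operation, then that the convex hull is, by expanding each $f_i$ as a convex combination of elements of $V$ supported inside $\mathrm{supp}(f_i)$ and recombining the cross terms, and finally passes to the closure; this requires Propositions \ref{char_ball} and \ref{weak_pw} as input. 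You instead prove the same property by a one-line duality computation, pairing $\tfrac12\sum_i g_i$ against an arbitrary $y\in B_F$ and invoking the inequality $\tfrac12\sum_{i=1}^k\norm{E_iy}{F}\leq\norm{y}{F}$ built into the Figiel--Johnson implicit equation. Your route is shorter, makes the duality between the two implicit equations explicit at exactly the right spot, and in fact needs less machinery than you claim: convexity, closedness, and $\pm e_k^*\in B_{F^*}$ are elementary facts about a dual unit ball, so your version of the lemma does not depend on Proposition \ref{char_ball} at all. The paper's combinatorial route keeps everything phrased in terms of the generating set $V$, which is the form reused verbatim for mixed Tsirelson spaces in Section \ref{mixed}; but your argument transfers to that setting just as readily, replacing $\tfrac12$ by $\theta_l$ and Schreier admissibility by $\mathcal{M}_l$-admissibility. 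Your parenthetical justifications that the minimum in the first branch is attained (finitely many effective decompositions of a finitely supported vector) and that zero summands can be discarded in the second branch are both sound.
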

	\begin{proof}
	 	By propositions \ref{char_ball} and \ref{weak_pw}, $B_{F^*}$ is the closure of the convex hull of $V\cup\{0\}$ with respect to the topology of poinwise convergence. We shall use this to prove that $B_{F^*}$ has the following properties:
	\begin{itemize}
 		\item[(i)] The sequence $(e_n^*)$ is contained in $B_{F^*}$,
		\item[(ii)] if $f=(f_n)\in B_{F^*}$ and $g=(g_n)$ is such that $|g_n|\leq |f_n|$ for all $n\in\N$, then $g\in B_{F^*}$,
		\item[(iii)] if $f_1,\cdots,f_n\in B_{F^*}\cap c_{00}$ are such that $n\leq f_1<\cdots<f_n$, then $1/2(f_1+\cdots+f_n)\in B_{F^*}\cap c_{00}$.
	\end{itemize}
	This follows from the original construction of $T$. We include the proof for completeness, since we want to generalize our arguments to the dual of mixed Tsirelson spaces.\\
	Property (i) is clear by the definition of $V$. Note that the set $V$ has the closure property described in property (iii). Let $V'$ be the convex hull of $V\cup\{0\}$ then $V'$ has property (ii). Let $f_1,\cdots,f_n\in V'$ be such that $n\leq f_1<\cdots<f_n$ and let $f=1/2(f_1+\cdots +f_n)$. For each $i=1\leq n$, $f_i$ can be written as
	\[
 		f_i=\alpha_i^1g_i^1+\cdots+\alpha_i^mg_i^m
	\]
	for some $m\in\N$, some $g_i^j\in V\cup \{0\}$ and some non negative scalars $\alpha_i^j$ such that $\sum_s\alpha_i^s=1$ for all $i\leq n$. We may assume that $\mathrm{supp}(g_i^j)\subset\mathrm{supp}(f_i)$ for each $i,j$. Therefore for each choice of $s_1<\cdots<s_n\leq m$ we have that $n\leq g_1^{s_1}<g_2^{s_2}<\cdots<g_n^{s_n}$, so 
	\[
	 	\frac{1}{2}(g_1^{s_1}+\cdots+g_n^{s_n})\in V.
	\]
	Since $f$ can be written as a convex combination of vectors of this form, it follows that $f\in V'$. Hence $V'$ also has the closure property described in (iii). It easily follows that the closure of $V'$ also has properties (ii) and (iii).\\

We now prove by induction on $n$ that for $x\in c_{00}$, if $\pnorm{x}{n}\leq 1$ then $x\in B_{F^*}$. For the base case, assume that $||x||_{\ell_1}\leq 1$ and $x=(x_i)_i^k$ for some $k$, then $\sum_1^k |x_i|\leq 1$ so $x\in B_{F^*}$ by convexity of $B_{F^*}$.

	Now suppose that $\pnorm{x}{n+1}\leq 1$. If $\pnorm{x}{n+1}=2\underset{1\leq i\leq k}{\max}\pnorm{E_ix}{n}$ for some $k\leq E_1<\cdots <E_k$ such that $x=\sum E_i x$, then $\pnorm{2E_ix}{n}\leq 1$ for all $1\leq i\leq k$. By induction hypothesis this implies that $2 E_i x\in B_{F^*}$ for all $1\leq i\leq k$. Hence $x=\frac{1}{2}\sum 2E_ix\in B_{F^*}$ by property (iii).\\
	Now suppose that $\pnorm{x}{n+1}=\inf\{\pnorm{y}{n}+\pnorm{z}{n}:x=y+z\}$. For each $k\in\N$ let $y_k,z_k\in c_{00}$ be such that $x=y_k+z_k$ and $\pnorm{y_k}{n}+\pnorm{z_k}{n}\leq 1+1/k$. Then $u_k:= y_k/\rho_n(y_k),  v_k:=z_k/\rho_n(z_k)\in B_{F^*}$, by induction hypothesis. Since $$ \frac{x}{\rho_n(y_k)+\rho_n(z_k)}= \frac{\rho_n(y_k)}{\rho_n(y_k)+\rho_n(z_k)} u_k+  \frac{\rho_n(z_k)}{\rho_n(y_k)+\rho_n(z_k)}v_k,   $$
	and $B_{F^*}$ is convex, it follows that $ x/(\rho_n(y_k)+\rho_n(z_k))\in B_{F^*}$, or in other words, $$x\in  (\rho_n(y_k)+\rho_n(z_k)) B_{F^*}\subset (1+1/k)B_{F^*}$$ for every $k$. Hence, $x\in B_{F^*}$.

	\end{proof}
	Since $(e_n)_n$ is a shrinking basis of $F$, $(e_n^*)_n$ is a basis for $F^*$. Hence one can define on $c_{00}$ the $F^*$-norm,
$$\|(a_n)_n\|_{F^*}=\|\sum_{n}a_ne_n^*\|$$ for $(a_n)_n$ in $c_{00}$. Let $X$ be the completion of $c_{00}$ with respect to the norm $\no$ defined in \ref{norm}. 

	\begin{thm}\label{balls}
For every $x\in c_{00}$, $\|x\|_{F^*}=\| x\|$. Hence $X=F^*$.
	\end{thm}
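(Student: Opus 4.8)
The plan is to establish the two inequalities $\|x\|_{F^*}\le\|x\|$ and $\|x\|\le\|x\|_{F^*}$ on $c_{00}$ and then pass to completions. For the first inequality I would use the preceding lemma (``$\rho_n(x)\le 1\Rightarrow x\in B_{F^*}$'') together with a scaling argument. Fix $x\in c_{00}$, $x\neq 0$, and $\varepsilon>0$. By homogeneity (property (iii)) and monotonicity (property (i)), $\rho_n\bigl(x/(\|x\|+\varepsilon)\bigr)=\rho_n(x)/(\|x\|+\varepsilon)\to\|x\|/(\|x\|+\varepsilon)<1$, so this quantity is $\le 1$ for all large $n$. The lemma then places $x/(\|x\|+\varepsilon)$ in $B_{F^*}$, i.e. $\|x\|_{F^*}\le\|x\|+\varepsilon$, and letting $\varepsilon\to 0$ gives $\|x\|_{F^*}\le\|x\|$.

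The reverse inequality is the substance of the theorem. By homogeneity it suffices to show that $x\in B_{F^*}\cap c_{00}$ implies $\|x\|\le 1$. A preliminary step is to verify that $V\subseteq B_X$, where $B_X=\{y\in c_{00}:\|y\|\le 1\}$, by induction on the level $V_n$. The base case $\pm e_k^*\in B_X$ follows from $\|e_k\|\le\rho_0(e_k)=1$ by the monotonicity of Lemma \ref{isnorm}. For the inductive step, if $f=\tfrac12(f_1+\cdots+f_k)$ with $k\le f_1<\cdots<f_k$ and each $\|f_i\|\le 1$, then taking $E_i=\mathrm{supp}(f_i)$ gives an admissible decomposition ($k\le E_1<\cdots<E_k$, $f=\sum_i E_i f$) with $E_i f=\tfrac12 f_i$, so the implicit equation of Proposition \ref{prop_min} yields $\|f\|\le 2\max_i\|E_i f\|\le 1$. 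Convexity of $B_X$ (triangle inequality and homogeneity of $\|\cdot\|$, with $\|0\|=0$) then gives $V':=\mathrm{conv}(V\cup\{0\})\subseteq B_X$; note $V'\subseteq c_{00}$, so $\|\cdot\|$ is defined on it.

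Now take $x\in B_{F^*}\cap c_{00}$. By Propositions \ref{char_ball} and \ref{weak_pw}, $x$ lies in the pointwise closure of $V'$, so (using that $F$ is separable, hence $B_{F^*}$ is weak*-metrizable) there is a sequence $(f^m)\subseteq V'\subseteq B_X$ converging to $x$ coordinatewise. The main obstacle is that the supports of the $f^m$ may grow, so coordinatewise convergence does not by itself control the norm $\|\cdot\|$. I would resolve this by truncating to the support of $x$: fix $N$ with $\mathrm{supp}(x)\subseteq\{1,\dots,N\}$ and set $g^m=\{1,\dots,N\}f^m$. By the monotonicity in Lemma \ref{isnorm}, $\|g^m\|\le\|f^m\|\le 1$, while $g^m\to x$ in the finite-dimensional subspace $\mathrm{span}(e_1,\dots,e_N)$. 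Since every norm on a finite-dimensional space has closed unit ball, the limit satisfies $\|x\|\le 1$. This reduction to finite dimensions, enabled entirely by the monotonicity of the norm, is the crux. Scaling then gives $\|x\|\le\|x\|_{F^*}$ for all $x\in c_{00}$.

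Combining the two inequalities yields $\|x\|=\|x\|_{F^*}$ for every $x\in c_{00}$. Since $c_{00}$ is dense in $X$ by definition and dense in $F^*$ because $(e_n^*)_n$ is a basis of $F^*$, the two completions coincide and $X=F^*$ isometrically.
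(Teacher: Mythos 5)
Your proposal is correct and follows essentially the same route as the paper: the inequality $\|x\|_{F^*}\le\|x\|$ via the lemma plus scaling, and the reverse inequality by showing $V\subseteq B_X$ with Proposition \ref{prop_min}, invoking Propositions \ref{char_ball} and \ref{weak_pw} to approximate $x$ pointwise from $\mathrm{conv}(V\cup\{0\})$, and reducing to a finite-dimensional span (your truncation via monotonicity plays the same role as the paper's remark that $V$ is closed under restrictions). The only differences are cosmetic, and your explicit appeal to weak*-metrizability when extracting a sequence is if anything slightly more careful than the paper's.
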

	\begin{proof}
		Let $x\in c_{00}$ with $x=(x_i)_1^k$ for some $k$, we first prove that $\|x\|\le \|x\|_{F^*}$. We may assume that $\|x\|_{F^*}=1$, so $x\in B_{F^*}$. Using proposition \ref{prop_min}, it can be proved by induction that $V_n\subset B_X$ for all $n\in\N$. Therefore the convex hull of $V\cup\{0\}$ is contained in $B_X$. We know that  $x$ is the pointwise limit of a sequence $(y_n)_n$ in $\mathrm{conv}(V\cup \{0\})$. Since $V$ is closed under restrictions, we may assume that $\mathrm{supp}(y_n)\subset\mathrm{supp}(x)$. It follows that the sequence $(y_n)_n$ converges to $x$ with respect to the norm $\no$, and since $B_X$ is closed, we have that $\|x\|\le 1=\|x\|_{F^*}$.\\

	Now we check that $\|x\|_{F^*}\leq \|x\|$, again we may assume that $\|x\|=1$. 

	Let $(n_k)_k$ be a sequence of natural numbers such that $\pnorm{x}{n_k}<1+1/k$. Then $\frac{x}{1+1/k}\in B_{F^*}$ by the previous lemma, and since $\frac{x}{1+1/k}\longrightarrow x$ pointwise, it follows that $x\in B_{F^*}$.\\
	
	So we have proved that the norm $\no$ coincides with $\|\cdot\|_{F^*}$ on $c_{00}$. Since $c_{00}$ is dense in $X$ and $\langle e_n^*\rangle$ is dense in $F^*$, it follows that $X=F^*$, that is $F^*$ is the completion of $c_{00}$ with respect to the norm $\no$.

	\end{proof}
\section{A norm for the dual spaces of mixed \newline Tsirelson spaces}\label{mixed}
Let $\mathcal{M}$ denote a compact family (in the topology of pointwise convergence) of finite subsets of $\N$ which includes all singletons. We say that a family $E_1<\cdots<E_n$ of subsets of $\N$ is $\mathcal{M}$-admissible if there exists $M=\{m_i\}_1^n$ in $\mathcal{M}$ such that $m_1\leq E_1<m_2\leq E_2<\cdots<m_n\leq E_n$.\\

\begin{defi}
 	Let $(\mathcal{M}_n)_n, (\theta_n)_n$ be two sequences with each $\mathcal{M}_n$ a compact family of finite subsets of $\N$, $0<\theta_n<1$ and $\lim_n\theta_n=0$. The mixed Tsirelson space $\mathcal{T}[(\mathcal{M}_n,\theta_n)_n]$ is the completion of $c_{00}$ with respect to the norm
	\[
	 	\|x\|_*=\max\{\norm{x}{\infty}, \sup_n\sup\theta_n\sum_{i=1}^k\|E_ix\|_*\},
	\]
	where the inside sup is taken over all choices $E_1<E_2<\cdots<E_k$ of $\mathcal{M}_n$-admissible families.
\end{defi}
	
In this notation, Schlumprecht space $\mathcal{S}$ is the mixed Tsirelson space $\mathcal{T}[(\mathcal{A}_n,\frac{1}{\log_2(n+1)})_n]$, where $\mathcal{A}_n=\{F\subset\N:\#F\leq n\}$.\\

The definitions above and further properties of mixed Tsirelson spaces can be found in  of \cite[Part A, Chapter 1]{Argyros_Ramsey}. We shall present how our results can be adapted to give an expression for the norm of the dual of any mixed Tsirelson space $\mathcal{T}[(\mathcal{M}_n,\theta_n)_n]$. As in section \ref{norm_def}, we have the following definition.
	\begin{defi}\label{norm_mixed} For $x\in c_{00}$ let
	\begin{eqnarray*}
		\rho_0(x)&=&||x||_{\ell_1}\\
		\rho_{n+1}(x)&=& \min\{\min\{\frac{1}{\theta_l}\underset{1\leq i\leq k}{\max}\pnorm{E_ix}{n}:(E_i)_1^k \mbox{ }\mathcal{M}_l\mbox{-admissible},\\
		&&x=\sum_{i=1}^k E_i x, l\in\N\},\inf\{\rho_n(w^1)+\rho_n(w^2):x=w^1+w^2\}\}\\
		||x||&=&\lim_{n\to\infty}\rho_n(x)
	\end{eqnarray*}
	\end{defi}
	This defines a norm $||\cdot||$ on $c_{00}$, since the proof of lemma \ref{isnorm} can be carried out for this modified expression. Specifically, the argument for monotonicity is independent of the Shreier condition and of the coefficient 2, and clearly the properties (i)-(iii) used in the proof of the lemma are also satisfied.\\
	
	The proof of proposition \ref{prop_min} goes through as well, so the norm defined above satisifies the following implicit equation
	\begin{eqnarray*}
	||x||&=& \min\{\min\{\frac{1}{\theta_l}\underset{1\leq i\leq k}{\max}\norm{E_ix}{}: (E_i)_1^k \mbox{ }\mathcal{M}_l\mbox{-admissible}, x=\sum_{i=1}^k E_i x,l\in\N\},\\
		&&\inf\{\norm{y}{}+\norm{z}{}:x=y+z, \mathrm{supp} (y)\subseteq \mathrm{supp} (x)\}\},
	\end{eqnarray*}
	the standard basis of $c_{00}$ is normalized with respect to $\no$, and is the maximum such norm.\\
	To describe the unit ball of $\mathcal{T}[(\mathcal{M}_n,\theta_n)_n]^*$, we use the following sequence of sets:
	\begin{eqnarray*}
 		V_0 &=& \{\pm e_k^*: k\in\N\}\\
		V_{n+1} &=& V_n\cup \{\theta_l(f_1+\cdots +f_k): f_1<\cdots<f_k, f_i\in V_n, \\
	&&(\mathrm{supp}(f_i))_i^k,\mbox{ }\mathcal{M}_l\mbox{-admissible}, l\in\N\}\\
		V &=& \bigcup V_n.
	\end{eqnarray*}
	It is easy to see, just as for $F$, that for any $x\in \mathcal{T}[(\mathcal{M}_n,\theta_n)_n]$, 
	\[
	\|x\|_*=sup\{f(x):f\in V\}.
	\]
	So by the Bipolar Theorem, the unit ball of the dual of $\mathcal{T}[(\mathcal{M}_n,\theta_n)_n]$ is the weak*-closure of the convex hull of $V\cup\{0\}$. Since $(e_n)_n$ is a shrinking basis of $\mathcal{T}[(\mathcal{M}_n,\theta_n)_n]$, $(e_n^*)_n$ is a basis for $\mathcal{T}[(\mathcal{M}_n,\theta_n)_n]^*$. Hence one can define on $c_{00}$ the norm,
$$\|(a_n)_n\|_{\mathcal{T}[(\mathcal{M}_n,\theta_n)_n]^*}=\|\sum_{n}a_ne_n^*\|$$ for $(a_n)_n$ in $c_{00}$. Let $X$ be the completion of $c_{00}$ with respect to the norm $\no$ defined in \ref{norm_mixed}. We restate theorem \ref{balls} for $\mathcal{T}[(\mathcal{M}_n,\theta_n)_n]^*$:
	\begin{thm}
	 	For every $x\in c_{00}$, $\|x\|_{\mathcal{T}[(\mathcal{M}_n,\theta_n)_n]^*}=\| x\|$. Hence $X=\mathcal{T}[(\mathcal{M}_n,\theta_n)_n]^*$.
	\end{thm}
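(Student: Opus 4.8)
The plan is to transcribe the proof of Theorem \ref{balls} essentially line by line, with $\mathcal{M}_l$-admissibility and the constants $1/\theta_l$ taking over the roles previously played by the Schreier condition $k\le E_1<\cdots<E_k$ and the constant $2$. Write $Y=\mathcal{T}[(\mathcal{M}_n,\theta_n)_n]$ and let $B_{Y^*}$ be the unit ball of its dual. By the Bipolar Theorem together with Proposition \ref{weak_pw}, $B_{Y^*}$ is the closure of $\mathrm{conv}(V\cup\{0\})$ in the topology of pointwise convergence, where $V=\bigcup_n V_n$ is the set defined above. The two inequalities to establish are $\norm{x}{}\le\|x\|_{Y^*}$ and $\|x\|_{Y^*}\le\norm{x}{}$. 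The first follows as in Theorem \ref{balls}: an induction on $n$ using the implicit equation satisfied by $\no$ (the analogue of Proposition \ref{prop_min}) gives $V_n\subseteq B_X$, hence $\mathrm{conv}(V\cup\{0\})\subseteq B_X$; since $V$ is closed under restriction of supports, any $x\in B_{Y^*}$ is a $\no$-limit of a sequence in $\mathrm{conv}(V\cup\{0\})$ supported in $\mathrm{supp}(x)$, so $\norm{x}{}\le\|x\|_{Y^*}$. The second inequality reduces, exactly as before, to the analogue of the lemma preceding Theorem \ref{balls}.

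That lemma, namely that $\pnorm{x}{n}\le1$ implies $x\in B_{Y^*}$, is the real content, and it rests on showing that $B_{Y^*}$ enjoys the three closure properties: (i) $(e_k^*)_k\subseteq B_{Y^*}$; (ii) if $f\in B_{Y^*}$ and $|g_k|\le|f_k|$ coordinatewise then $g\in B_{Y^*}$; and (iii) if $f_1<\cdots<f_k$ lie in $B_{Y^*}\cap c_{00}$ with $(\mathrm{supp}(f_i))_1^k$ $\mathcal{M}_l$-admissible, then $\theta_l(f_1+\cdots+f_k)\in B_{Y^*}\cap c_{00}$. Properties (i) and (ii) are immediate from the definition of $V$ and from convexity. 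For (iii) I would work in $V'=\mathrm{conv}(V\cup\{0\})$: expand each $f_i=\sum_j\alpha_i^jg_i^j$ as a convex combination of $g_i^j\in V\cup\{0\}$ with $\mathrm{supp}(g_i^j)\subseteq\mathrm{supp}(f_i)$, so that $\theta_l\sum_i f_i$ becomes a convex combination of the vectors $\theta_l(g_1^{s_1}+\cdots+g_k^{s_k})$. The crux is that each such vector again lies in $V$: restricting the sets $\mathrm{supp}(f_i)$ to the subsets $\mathrm{supp}(g_i^{s_i})$ preserves $\mathcal{M}_l$-admissibility, since the same witness $M=\{m_i\}_1^k\in\mathcal{M}_l$ still obeys $m_i\le\mathrm{supp}(g_i^{s_i})$ (passing to a subset only raises minima and lowers maxima), while vanishing summands are discarded using that a subfamily of an $\mathcal{M}_l$-admissible family is again $\mathcal{M}_l$-admissible. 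Taking pointwise closures preserves (ii) and (iii), so they hold for $B_{Y^*}$ itself.

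Granting these, the lemma follows by induction on $n$ as before: the base case $\pnorm{x}{0}=\|x\|_{\ell_1}\le1$ gives $x\in B_{Y^*}$ by convexity; if $\pnorm{x}{n+1}=\tfrac{1}{\theta_l}\max_{1\le i\le k}\pnorm{E_ix}{n}\le1$ for an $\mathcal{M}_l$-admissible decomposition $x=\sum_i E_ix$, then $\pnorm{(1/\theta_l)E_ix}{n}\le1$, so $(1/\theta_l)E_ix\in B_{Y^*}$ by the inductive hypothesis and $x=\theta_l\sum_i(1/\theta_l)E_ix\in B_{Y^*}$ by (iii); the infimum alternative is handled verbatim as in Section \ref{norm_def}. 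Finally, choosing $n_k$ with $\pnorm{x}{n_k}<1+1/k$ gives $x/(1+1/k)\in B_{Y^*}$ and hence $x\in B_{Y^*}$, i.e. $\|x\|_{Y^*}\le\norm{x}{}$; combined with the first inequality this yields $\|x\|_{Y^*}=\norm{x}{}$ on $c_{00}$, and density identifies $X$ with $Y^*$.

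The step I expect to be the main obstacle is property (iii), and within it the assertion that restricting supports preserves $\mathcal{M}_l$-admissibility and that zero summands may be dropped. For the Schreier-type condition $k\le E_1<\cdots<E_k$ this was transparent, but for a general compact family it requires $\mathcal{M}_l$ to be closed under passing to subsets (hereditary). One should therefore either restrict attention to regular families (as is standard for mixed Tsirelson spaces) or check directly that the witness $M$, possibly after deleting the indices of the vanishing summands, remains in $\mathcal{M}_l$. Everything else is a faithful transcription of the arguments of Section \ref{norm_def}.
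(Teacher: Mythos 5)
Your proposal is correct and takes essentially the same route as the paper, which itself only sketches this theorem by deferring to the proof of Theorem \ref{balls} and listing the three closure properties (i)--(iii) of $B_{\mathcal{T}[(\mathcal{M}_n,\theta_n)_n]^*}$. Your flag on property (iii), that restricting supports and discarding vanishing summands must preserve $\mathcal{M}_l$-admissibility (so one should either take the families hereditary/regular or check the pruned witness $M$ directly), is a genuine subtlety that the paper glosses over, and your resolution of it is the right one.
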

	For $x\in c_{00}$, the inequality $\|x\|\leq\|x\|_{\mathcal{T}[(\mathcal{M}_n,\theta_n)_n]^*}$ can be proved just as we did for $F^*$. For the reverse inequality, one can prove that the unit ball of $\mathcal{T}[(\mathcal{M}_n,\theta_n)_n]^*$ has the following properties:
	\begin{itemize}
 		\item[(i)] The sequence $(e_n^*)$ is contained in $B_{\mathcal{T}[(\mathcal{M}_n,\theta_n)_n]^*}$,
		\item[(ii)] if $f=(f_n)\in B_{\mathcal{T}[(\mathcal{M}_n,\theta_n)_n]^*}$ and $g=(g_n)$ is such that $|g_n|\leq |f_n|$ for all $n\in\N$, then $g\in B_{\mathcal{T}[(\mathcal{M}_n,\theta_n)_n]^*}$,
		\item[(iii)] for every $k\in\N$, if $f_1,\cdots,f_n\in B_{\mathcal{T}[(\mathcal{M}_n,\theta_n)_n]^*}\cap c_{00}$ are such that $f_1<\cdots<f_n$, and $(\mathrm{supp}(f_i))_i^n$ is $\mathcal{M}_k$-admissible, then $\theta_k(f_1+\cdots+f_n)\in B_{\mathcal{T}[(\mathcal{M}_n,\theta_n)_n]^*}\cap c_{00}$.
	\end{itemize}
	So the norm defined in \ref{norm_mixed} is the norm of the dual of the mixed Tsirelson space $\mathcal{T}[(\mathcal{M}_n,\theta_n)_n]$.\\
	
	At this moment it is not clear whether the norm of $T$ satisfies an implicit equation that allows us to calculate the norm of finitely supported vectors inductively in the cardinality of the support. A natural attempt is to replace the infimum term in proposition \ref{prop_min} by the $\ell_1$ norm but a few calculations show that the scalar function obtained does not satisfy the triangle inequality.\\
	
	\emph{Acknowledgments.} We thank Justin Moore and Jordi Lopez-Abad for their helpful suggestions. The author is grateful to Valentin Ferenczi for his comments related to the characterization of the norm.
	\bibliography{C:/Users/Diana/Documents/texfiles/mybib}
\end{document}